\documentclass[12pt]{amsart}
\usepackage{geometry}
\usepackage{amsmath,amssymb,amsthm,mathtools}
\usepackage{enumitem}
\usepackage[backref,colorlinks=true,linkcolor=blue,citecolor=blue,urlcolor=blue]{hyperref}

\setlist[itemize]{leftmargin=2em}
\setlist[enumerate]{leftmargin=2em}

\numberwithin{equation}{section}
\numberwithin{figure}{section}

\newtheorem{theorem}{Theorem}[section]
\newtheorem{lemma}[theorem]{Lemma}

\newtheorem{corollary}[theorem]{Corollary}
\theoremstyle{definition}
\newtheorem{definition}[theorem]{Definition}
\theoremstyle{remark}
\newtheorem{remark}[theorem]{Remark}

\newcommand{\R}{\mathbb{R}}
\newcommand{\Lup}{L^{\operatorname{up}}}
\newcommand{\Ldown}{L^{\operatorname{down}}}
\newcommand{\im}{\operatorname{im}}
\newcommand{\calH}{\mathcal{H}}
\newcommand{\calF}{\mathcal{F}}
\newcommand{\tH}{\widetilde H}
\newcommand{\p}{\partial}
\def \d{\delta}

\begin{document}

\title[Upper Bounds for Largest Laplacian Eigenvalue]{Upper Bounds for the Largest Laplacian Eigenvalue of Simplicial Complexes}
\author [H.-Z. Zhang]{Huan-Zhi Zhang}
\address{\small School of Mathematical Sciences, Anhui University, Hefei 230601, P. R. China}
\email{zhanghz@stu.ahu.edu.cn}
\author [Y.-Z. Fan]{Yi-Zheng Fan*}
\address{\small Center for Pure Mathematics, School of Mathematical Sciences, Anhui University, Hefei 230601, P. R. China}
\email{fanyz@ahu.edu.cn}
\thanks{*Corresponding author. Supported by National Natural Science Foundation of China (No. 12331012).}
	
\subjclass[2020]{05E45, 15A18}
	
\keywords{Simplicial complex; Laplacian eigenvalue; upper bound; homology}

\begin{abstract}
Let $K$ be a finite $r$-dimensional simplicial complex with vertex set $V$ of size $n$.
We study the largest eigenvalue of the combinatorial  $(r-1)$-up Laplacian $\Lup_{r-1}(K)$.
It is known that
\[
\lambda_{\max}\bigl(\Lup_{r-1}(K)\bigr)\le n.
\]
We first give a homological equality criterion for this universal bound,
namely, the equality holds if and only if the $r$-dimensional complement $K^c$ of $K$ has a nonzero reduced homology $\tH_{r-1}(K^c,\R)$.
For $r=1$, this is the classical graph condition that the complement graph is disconnected.

Secondly, we prove a sharper upper bound for $\lambda_{\max}(\Lup_{r-1}(K))$:
\[ \lambda_{\max}(\Lup_{r-1}(K)) \le \max_{F\in S_r(K)} \bigl|\bigcup_{E \in \p F} N_K(E) \bigr| \le n,\]
where, for an $(r-1)$-face $E$, $N_K(E)$ denotes the set of vertices $u$ outside $E$ such that the union $E \cup \{u\}$ is an $r$-face of $K$.
This is the high-dimensional analog of the graph Laplacian bound.
We give an explicit characterization of the equality case, and construct a broad family attaining the bound, namely, the partite semiregular complexes with admissible additions.
\end{abstract}

\maketitle

\section{Introduction}
The largest Laplacian eigenvalue is one of the basic spectral parameters of a finite graph.
Let $G$ be a graph on $n$ vertices, and let $L(G)$ be its Laplacian matrix.
Anderson and Morley \cite{AM1985} proved that every Laplacian eigenvalue is at most $n$, with equality if and only if the complement graph $G^c$ is disconnected; see also \cite{M1994, M1998}.
They also obtained the edge-degree estimate
\begin{equation}\label{AD-bound}
    \lambda_{\max}(L(G))  \le \max_{\{u,v\}\in E(G)}(\deg u+\deg v),
\end{equation}
  and characterized the equality case when $G$ is connected.
Subsequent work refined these estimates by replacing bounds expressed only in terms of vertex degrees with bounds that incorporate the local overlap structure of neighborhoods.
Rojo, Soto and Rojo \cite{RSR2000} provided an always nontrivial
bound involving common neighbors, and Das \cite{Das2003} sharpened this to
\begin{equation}\label{Das-bound}
   \lambda_{\max}(L(G))  \le   \max_{\{u,v\}\in E(G)} |N_G(u)\cup N_G(v)|,
\end{equation}
where $N_G(w)$ denotes the neighborhood of a vertex $w$ in the graph $G$.
Since
$$
|N_G(u)\cup N_G(v)|=\deg u+ \deg v-|N_G(u)\cap N_G(v)|
$$
for an edge $\{u,v\}$, this bound is never larger than the Anderson-Morley bound \eqref{AD-bound} and is always at most $n$.
The equality case for Das' bound \eqref{Das-bound} was studied by Yu, Lu, and Tian \cite{YLT2005}, who showed that the extremal connected graphs are built from semiregular bipartite graphs with controlled additions inside the parts.
Das also characterized equality for several related Laplacian upper bounds \cite{Das2004}.

There is a parallel higher-dimensional theory for simplicial complexes.
An \emph{abstract simplicial complex}, or simply a \emph{complex}, is a collection $K$ of finite sets closed under inclusion.
The graph Laplacian is the first combinatorial up Laplacian of a simplicial complex: if a graph is regarded as a one-dimensional complex, then $L(G)=\partial_1\partial_1^*$ on $0$-chains.
For a simplicial complex $K$, the combinatorial $(r-1)$-up Laplacian is
\[  \Lup_{r-1}(K)=\partial_r\partial_r^*,\]
acting on $(r-1)$-chains.
Combinatorial Laplacians of this type go back to Eckmann's discrete Hodge theory \cite{Eckmann1944} and have been developed in many directions.
Duval and Reiner \cite{DR2002} studied their spectra for shifted complexes and proved, among
other things, the universal estimate
\begin{equation}\label{DR-bound}
\lambda_{\max}(\Lup_{r-1}(K))\le n
\end{equation}
for a complex on $n$ vertices.
Horak and Jost \cite{HJ2013a,HJ2013b} provided a systematic framework for discrete
Laplace operators on weighted simplicial complexes and established general eigenvalue bounds and interlacing inequalities.
More recently, Fan, Wu, and Wang \cite{FWW2025} studied the largest up-Laplacian eigenvalue through the associated signless Laplacian and incidence signed graph, obtaining a high-dimensional analog of Anderson–Morley upper bounds
\begin{equation}\label{Fan-bound}
\lambda_{\max}(\Lup_{r-1}(K))\le \max_{F \in S_r(K)} \sum_{E \in \p F} \deg E,
\end{equation}
and the balancedness criteria for equality.

The aim of the present paper is to generalize these results on graphs to simplicial complexes.
The first main result is a homological equality criterion for the universal bound $n$.
Given an $r$-complex $K$ on vertex set $V$ of size $n$ with $r\ge 1$, define the $r$-dimensional complement $K^c$ to have the complete $(r-1)$-skeleton on $V$ and exactly those $r$-faces that are missing from $K$.
We prove
\[
\lambda_{\max}(\Lup_{r-1}(K))\le n
\]
and equality holds if and only if the reduced homology $\widetilde H_{r-1}(K^c,\R)\ne 0$.
For $r=1$, this is exactly the classical graph statement that $\lambda_{\max}(L(G))=n$ if and only if the complement graph $G^c$ is disconnected.

The second main result is a Das-type upper bound for simplicial complexes.
For an $(r-1)$-face $E$, let $N_K(E)$ be the set of vertices $u$ outside $E$ such that $E \cup \{u\}$ is an $r$-face of $K$.
The \emph{degree} of $E$, denoted by $\deg E$, is the cardinality of $N_K(E)$.
For an $r$-face $F$, we call the cardinality $|\cup_{E \in \p F} N_K(E) |$ the \emph{local number} of $F$, denoted by $\ell_K(F)$.
Then
\begin{equation}\label{New-bound}
\lambda_{\max}(\Lup_{r-1}(K)) \le \max_{F\in S_r(K)} \bigl|\bigcup_{E \in \p F} N_K(E) \bigr|
\le n.
\end{equation}
Since
$$
\bigl|\bigcup_{E \in \p F} N_K(E) \bigr| \le \sum_{E \in \p F} |N_K(E)|
=\sum_{E \in \p F}\deg E,
$$
the above bound is never larger than the bound in \eqref{Fan-bound}.
When $r=1$, this becomes precisely Das' graph bound \eqref{Das-bound}.
The proof is based on a decomposition of the down Laplacian into local matrices supported on $(r+2)$-vertex sets.
This decomposition also gives an exact quadratic form for the down Laplacian and an equality criterion: equality in the upper bound holds if and only if a certain mapping has a nonzero kernel that contains a vector supported on $r$-faces with maximum local number.

The paper is organized as follows.
Section \ref{section_basic} introduces notation and conventions for
simplicial complexes and combinatorial Laplacians.
Section \ref{section_n} proves the universal $n$ bound and the homological equality condition.
Section \ref{section_new} proves a sharper upper bound, recovering Das' theorem in the graph case;
and derives the equality criterion for the sharper upper bound.
Section \ref{section_fam} constructs a family of partite semiregular complexes with admissible additions that attain the sharper upper bound.
In dimension one, this family specializes to the semiregular bipartite graph families appearing in the equality characterization for Das' bound.

\section{Preliminaries}\label{section_basic}
Let $K$ be a finite simplicial complex with vertex set $V=V(K)$.
An $r$-face of $K$ is an element of $K$ with cardinality $r+1$.
The set of $r$-faces of $K$ is denoted by $S_r(K)$.
The \emph{dimension} of an $r$-face is $r$, and the \emph{dimension} of $K$ is the maximum dimension of all faces of $K$.
A complex of dimension $r$ is usually called an \emph{$r$-complex}.
We take the empty set as an element of $K$ whose dimension is $-1$.
For an integer $p\ge -1$, the \emph{$p$-skeleton }of $K$ is denoted by $K^{(p)}$,
that is, the subcomplex consisting of all faces of $K$ of dimension at most $p$.

For a finite vertex set $V$, let $\Delta_V$ denote the full simplex on $V$, which can be considered a simplicial complex  with all possible subsets of $V$ as faces.
If $|V|=n$, we also write $\Delta_n$ for a simplex on $n$ vertices.
Thus $\Delta_n$ has dimension $n-1$.

For two $i$-faces of $K$ that share an $(i-1)$-face, we refer to them as \emph{down neighbors}.
An \emph{$i$-down path} is a sequence of $i$-faces $F_1,F_2,\ldots,F_m$ such that $F_j$ and $F_{j+1}$ are down neighbors for all $j$.
The complex $K$ is called \emph{$i$-down path connected} if any two $i$-faces are connected by an $i$-down path.

An \emph{ordered finite set} is a finite set together with a linear order on its elements.
Let $\tau\subseteq \sigma$ be two ordered finite sets.
We define $(\sigma:\tau)$ to be the sign of the permutation that sends the ordered set $\sigma$ to the ordered set $(\sigma\setminus\tau,\tau)$, where the order on $\sigma\setminus\tau$ is induced by the order on $\sigma$.
Usually, we write $[\sigma]$ for an ordered set $\sigma$.

In particular, a face $F$  is \emph{oriented} if we assign an order to its vertices and write $[F]$.
Two orderings of the vertices are said to determine the same orientation if there is an even permutation transforming one ordering into the other.
If the permutation is odd, then the orientations are opposite.
After choosing one orientation for each $r$-face, the \emph{$r$-th chain group} $C_r(K,\R)$ of $K$ with coefficients in $\R$ is the vector space over $\R$ with basis
$B_r(K)=\{[F]:F\in S_r(K)\}$.
The $r$-th boundary map $\partial_r:C_r(K,\R)\to C_{r-1}(K,\R)$ is defined by
\[
\partial_r[v_0,\ldots,v_r]=\sum_{j=0}^r(-1)^j[v_0,\ldots,\widehat v_j,\ldots,v_r],
\]
where $\widehat v_j$ indicates that the vertex $v_j$ is omitted.
For an $r$-face $F$, we write $\partial F$ for the set of all $(r-1)$-faces contained in $F$.

Throughout the paper, we use the augmented chain complex.
Set $C_{-1}(K,\R)=\R$, and define the augmentation map $\partial_0:C_0(K,\R)\to \R$ by
$\partial_0(\sum_v a_v v)=\sum_v a_v$.
These boundary maps satisfy $\partial_r\partial_{r+1}=0$, and hence they form the augmented chain complex
\[
\cdots \xrightarrow{\partial_{r+1}} C_r(K,\R) \xrightarrow{\partial_r} C_{r-1}(K,\R)
\to\cdots \xrightarrow{\partial_1} C_0(K,\R) \xrightarrow{\partial_0} C_{-1}(K,\R) \to 0.
\]
The kernel $\ker\partial_r$ is called the \emph{space of $r$-cycles}.
The image $\im\partial_{r+1}$ is called the \emph{space of $r$-boundaries}.
Since $\partial_r\partial_{r+1}=0$, we have $\im\partial_{r+1}\subseteq \ker\partial_r$.
The quotient
\[
\widetilde H_r(K,\R)=\ker\partial_r/\im\partial_{r+1}
\]
is called the \emph{$r$-th reduced homology group} of $K$ over $\R$.

For each $r$, endow $C_r(K,\R)$ with a positive definite inner product $\langle \cdot, \cdot \rangle$ that makes the basis $B_r(K)$ orthonormal.
The adjoint $\p_r^*: C_{r-1}(K,\R) \to C_{r}(K,\R)$ of $\p_r$  is defined by
\[
\langle \p_r f_1, f_2 \rangle_{C_{r-1}(K,\R)} = \langle f_1, \p_r^* f_2\rangle_{C_r(K,\R)}
\]
for all $f_1 \in C_r(K,\mathbb{R})$, $f_2 \in C_{r-1}(K,\mathbb{R})$.
The \emph{combinatorial $r$-up Laplacian} and \emph{combinatorial $r$-down Laplacian} are
\[
\Lup_{r}(K)=\p_{r+1}\p_{r+1}^*, \quad \Ldown_{r}(K)=\p_{r}^*\p_{r}.
\]

The matrices $\p_r\p_r^*$ and $\p_r^*\p_r$ have the same nonzero eigenvalues. Hence,
\begin{equation}\label{eq-ud}
\lambda_{\max}\bigl(\Lup_{r-1}(K)\bigr)        =\lambda_{\max}\bigl(\Ldown_r(K)\bigr)
\end{equation}
whenever $S_r(K)\ne\emptyset$.
We shall frequently use \eqref{eq-ud} to pass from
$\Lup_{r-1}(K)$ to $\Ldown_r(K)$. This is useful because the matrix
$\Ldown_r(K)$ is indexed by $r$-faces, and its off-diagonal entries are
controlled by intersections of pairs of $r$-faces.

\section{Equality at the universal bound \texorpdfstring{$n$}{n}}\label{section_n}

\begin{definition}
Let $K$ be an $r$-dimensional complex on vertex set $V$, where  $r\ge 1$.
The \emph{$r$-dimensional complement} $K^c$ of $K$ is the complex
\[
K^c=\Delta_V^{(r-1)}\cup \{F\subseteq V: |F|=r+1,\ F\notin S_r(K)\}.
\]
Equivalently,
\[
 S_r(K^c)=\binom{V}{r+1}\setminus S_r(K),
\]
and $K^c$ has the complete $(r-1)$-skeleton on $V$.
\end{definition}

To avoid confusion, we use $\p_r^K$ to stress the boundary operator acting on the complex $K$.

\begin{lemma}\label{lem:full-simplex}
Let $\Delta_V$ be the full simplex on $n$ vertices, and let $\Delta_V^{(r)}$ be its $r$-skeleton.
Then
\begin{equation}\label{eq:full-total}
 \Lup_{r-1}(\Delta_V^{(r)}) +\Ldown_{r-1}(\Delta_V^{(r)}) = nI.
\end{equation}
Moreover, $\Lup_{r-1}(\Delta_V^{(r)})$
has only the eigenvalues $0$ and $n$, and its $n$-eigenspace is
\[
 \im \p_r^{\Delta}=\ker \p_{r-1}^{\Delta}.
\]
\end{lemma}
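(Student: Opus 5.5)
The plan is to compute the entries of both matrices directly, using the orthonormal basis $B_{r-1}$ of oriented $(r-1)$-faces of $\Delta_V^{(r)}$. We work with the augmented complex, so that for $r=1$ the down Laplacian $\Ldown_0=\p_0^*\p_0$ is the all-ones matrix $J$. The case $r=1$ then reduces to the classical identity $L(K_n)+J=nI$, and the general computation will specialize to it.

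\textbf{Diagonal entries.} For an $(r-1)$-face $E$ (with $|E|=r$), the diagonal entry of $\Lup_{r-1}=\p_r\p_r^*$ is the number of $r$-faces containing $E$, namely $n-r$. The diagonal entry of $\Ldown_{r-1}=\p_{r-1}^*\p_{r-1}$ is the number of $(r-2)$-faces contained in $E$, namely $r$; in the augmented setting with $r=1$ this is the single empty face. The diagonal sum is therefore $n$.

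\textbf{Off-diagonal entries.} Take distinct $(r-1)$-faces $E,E'$.
\begin{itemize}
\item If $|E\cap E'|<r-1$, both entries vanish: $E\cup E'$ is not an $r$-face, and $E\cap E'$ is not an $(r-2)$-face.
\item If $|E\cap E'|=r-1$, then $E\cup E'$ is an $r$-face of $\Delta_V^{(r)}$. The up entry is $(E\cup E':E)(E\cup E':E')$ and the down entry is $(E:E\cap E')(E':E\cap E')$, using the sign convention $(\sigma:\tau)$.
\end{itemize}
The standard sign lemma says these two products are negatives of each other; this is the same cancellation that underlies the formula $\Lup+\Ldown=\deg$ on the diagonal in the Duval--Reiner and Goldberg setting. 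To verify it, write $E=\{a\}\cup T$ and $E'=\{b\}\cup T$. Reorder so that $T$ comes last in all of $E$, $E'$, $E\cup E'$, and track the sign of moving $a$ and $b$ past each other. This sign bookkeeping is the only delicate step, and I expect it to be the main obstacle; the rest of the argument is structural. Together the diagonal and off-diagonal computations give \eqref{eq:full-total}.

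\textbf{Spectrum.} Since $\p_{r-1}\p_r=0$, the operators $\Lup_{r-1}$ and $\Ldown_{r-1}$ multiply to zero in both orders. Both are positive semidefinite, and \eqref{eq:full-total} shows they commute and sum to $nI$. Hence $\Lup_{r-1}^2=\Lup_{r-1}(nI-\Ldown_{r-1})=n\Lup_{r-1}$, so the only eigenvalues are $0$ and $n$.

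\textbf{Eigenspace.} The $n$-eigenspace of $\Lup_{r-1}$ is its image, $\im \p_r\p_r^*=\im\p_r^{\Delta}$. It is also $\ker(nI-\Lup_{r-1})=\ker\Ldown_{r-1}=\ker\p_{r-1}^{\Delta}$, where the last equality holds because $\langle \p^*_{r-1}\p_{r-1}x,x\rangle=\|\p_{r-1}x\|^2$. Thus $\im\p_r^{\Delta}=\ker\p_{r-1}^{\Delta}$, which also recovers the vanishing of $\tH_{r-1}(\Delta_V^{(r)})$.
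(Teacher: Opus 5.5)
Your proof is correct. For the identity \eqref{eq:full-total} you do the entrywise check that the paper only asserts (``verified directly from the definition''), but your spectral argument takes a genuinely different route.

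The paper proceeds via the Hodge decomposition $C_{r-1}=\im\p_r^{\Delta}\oplus\im(\p_{r-1}^{\Delta})^*$. This needs the reduced acyclicity of $\Delta_V$, cited from Duval--Reiner. The paper then notes that $\Ldown_{r-1}$ vanishes on the first summand and $\Lup_{r-1}$ on the second, and appeals to acyclicity a second time to get $\im\p_r^\Delta=\ker\p_{r-1}^\Delta$.

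You instead combine $\Lup_{r-1}\Ldown_{r-1}=0$ with the identity to get $\Lup_{r-1}^2=n\Lup_{r-1}$, so $\frac{1}{n}\Lup_{r-1}$ is an orthogonal projection. You then describe its range in two ways: as $\im\p_r\p_r^*=\im\p_r^\Delta$, and as $\ker(nI-\Lup_{r-1})=\ker\Ldown_{r-1}=\ker\p_{r-1}^\Delta$. This makes the lemma self-contained, with no homological input. The vanishing of $\tH_{r-1}(\Delta_V,\R)$ comes out of the identity rather than being assumed, so the appeal to acyclicity in the proof of Theorem~\ref{eq-n} is covered by the lemma itself. The paper's version is shorter if Hodge theory and the acyclicity of the simplex are taken as known.

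The sign step you flag as the main obstacle is a one-liner. Set $F=E\cup E'$ and $D=E\cap E'$. The coefficient of $[D]$ in $\p_{r-1}\p_r[F]=0$ receives contributions only from $E$ and $E'$, so $(F:E)(E:D)+(F:E')(E':D)=0$. Multiplying by $(F:E')(E:D)$ gives exactly the relation you need, just as the paper derives \eqref{sign-rel}. Your canonical-ordering check also works, once you note two things: both products change sign together when $E$ or $E'$ is reoriented, and neither changes when $F$ or $D$ is reoriented.
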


\begin{proof}
The \eqref{eq:full-total} be verified directly from the definition.
Since the full simplex is reduced-acyclic, the Hodge decomposition gives
\[
C_{r-1}(\Delta_V,\R)=\im \p_r^{\Delta}\oplus \im(\p_{r-1}^{\Delta})^*;
\]
or see \cite[Theorem 3.3]{DR2002}.
On $\im\p_r^\Delta$, the down Laplacian $\Ldown_{r-1}(\Delta_V^{(r)})$ vanishes; hence, \eqref{eq:full-total} gives $\Lup_{r-1}(\Delta_V^{(r)})|_{\im\p_r^\Delta}=nI$.
On $\im(\p_{r-1}^\Delta)^*$,  the up Laplacian $\Lup_{r-1}(\Delta_V^{(r)})$ vanishes, namely,  $\Lup_{r-1}(\Delta_V^{(r)})|_{\im(\p_{r-1}^\Delta)^*}=0$.
Therefore, $\Lup_{r-1}(\Delta_V^{(r)})$ has a spectrum consisting of $0$ and $n$, and the eigenspace associated with $n$ is $\im\p_r^\Delta$.
Acyclicity of $\Delta_V$ gives $\im\p_r^\Delta=\ker\p_{r-1}^\Delta$.
\end{proof}

\begin{theorem}\label{eq-n}
Let $K$ be an $r$-dimensional finite simplicial complex on vertex set $V$,
where $|V|=n$ and $r\ge 1$.
Then
\[
\lambda_{\max}\bigl(\Lup_{r-1}(K)\bigr)\le n.
\]
Moreover, equality holds if and only if $\widetilde H_{r-1}(K^c,\R)\ne 0$.
\end{theorem}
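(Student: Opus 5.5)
The plan is to compare $K$ and $K^c$ inside the full $r$-skeleton $\Delta_V^{(r)}$. Both complexes share the complete $(r-1)$-skeleton on $V$, so $C_{r-1}(K,\R)=C_{r-1}(K^c,\R)=C_{r-1}(\Delta_V,\R)$ with the same orthonormal basis. Their $r$-faces partition $\binom{V}{r+1}$. Since $\p_r^\Delta(\p_r^\Delta)^*=\sum_{F}\p_r[F](\p_r[F])^*$, a sum of rank-one terms over the $r$-faces $F$, this partition gives the splitting
\[
\Lup_{r-1}(K)+\Lup_{r-1}(K^c)=\Lup_{r-1}(\Delta_V^{(r)}).
\]
The faces of $K$ of dimension below $r-1$ may not be all of the $(r-1)$-skeleton of $K$. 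This does not matter, because $\Lup_{r-1}(K)$ depends only on $\p_r^K$, and we may regard it as an operator on $C_{r-1}(\Delta_V,\R)$, padding with zeros if some $(r-1)$-faces are absent from $K$. This padding only adds zero eigenvalues, so $\lambda_{\max}$ is unchanged.

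\textbf{The bound.} Both summands on the left are positive semidefinite. Lemma~\ref{lem:full-simplex} says the right-hand side has spectrum in $\{0,n\}$, so $\Lup_{r-1}(\Delta_V^{(r)})\preceq nI$. Hence $\Lup_{r-1}(K)\preceq \Lup_{r-1}(\Delta_V^{(r)})\preceq nI$, which gives $\lambda_{\max}(\Lup_{r-1}(K))\le n$.

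\textbf{The equality case.} Let $x\neq0$ with $\Lup_{r-1}(K)x=nx$. Then
\[
n\|x\|^2=\langle \Lup_{r-1}(K)x,x\rangle\le \langle \Lup_{r-1}(\Delta)x,x\rangle\le n\|x\|^2,
\]
so both inequalities are equalities. Equality in the first one means $\langle\Lup_{r-1}(K^c)x,x\rangle=\|(\p_r^{K^c})^*x\|^2=0$, so $x\in\ker(\p_r^{K^c})^*$. Equality in the second one, together with the spectral description in Lemma~\ref{lem:full-simplex}, means $x$ lies in the $n$-eigenspace $\ker\p_{r-1}^\Delta$, since a unit vector attaining the top Rayleigh quotient of a matrix with spectrum $\{0,n\}$ lies in that eigenspace. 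Now $\p_{r-1}^{K^c}=\p_{r-1}^\Delta$, because the $(r-1)$-skeleta coincide. So
\[
x\in \ker\p_{r-1}^{K^c}\cap(\im\p_r^{K^c})^\perp,
\]
which is the space of harmonic $(r-1)$-chains of $K^c$. By the finite-dimensional Hodge decomposition this space is isomorphic to $\tH_{r-1}(K^c,\R)$, which is therefore nonzero.

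Conversely, suppose $\tH_{r-1}(K^c,\R)\ne0$ and pick a nonzero harmonic chain $x$, that is, $x\in\ker\p_{r-1}^\Delta$ with $(\p_r^{K^c})^*x=0$. Lemma~\ref{lem:full-simplex} gives $\Lup_{r-1}(\Delta)x=nx$, and $\Lup_{r-1}(K^c)x=0$. The splitting then gives $\Lup_{r-1}(K)x=nx$.

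\textbf{Expected obstacle.} The only delicate points are bookkeeping ones. The first is making sure the ambient space is correct: the augmented complex uses $C_{-1}=\R$, which matters when $r=1$, and Lemma~\ref{lem:full-simplex} is stated with this augmentation, so it is consistent. The second is the padding remark above, which handles $(r-1)$-faces missing from $K$. The third is invoking the Hodge isomorphism $\ker\p_{r-1}\cap\ker\p_r^*\cong\tH_{r-1}$, which is standard linear algebra: $\ker\p_{r-1}=\im\p_r\oplus(\ker\p_{r-1}\cap(\im\p_r)^\perp)$.
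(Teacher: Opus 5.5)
Your proposal is correct and follows the paper's proof essentially step for step. Both arguments split $\Lup_{r-1}(\Delta_V^{(r)})=\Lup_{r-1}(K)+\Lup_{r-1}(K^c)$ on the zero-padded space, squeeze the Rayleigh quotient to force $x\in\ker\p_{r-1}^{\Delta}$ and $(\p_r^{K^c})^*x=0$, and identify $\ker\p_{r-1}^{K^c}\cap(\im\p_r^{K^c})^\perp$ with $\tH_{r-1}(K^c,\R)$. Your opening claim that $K$ itself has the complete $(r-1)$-skeleton is false in general, but your padding remark already repairs this, just as the paper does.
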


\begin{proof}
Let $\Delta:=\Delta_V$ be the full simplex on $V$, and let $L_{\Delta}:=\Lup_{r-1}(\Delta^{(r)})$.
We regard all $(r-1)$-up Laplacians as acting on $C_{r-1}(\Delta,\R)$ by adding zero rows and columns corresponding to missing $(r-1)$-faces.
This extension only adds zero eigenvalues.
Since the $r$-faces of $K$ and $K^c$ partition $\binom{V}{r+1}$, we have
\begin{equation}\label{total}
  L_\Delta=\Lup_{r-1}(K)+\Lup_{r-1}(K^c).
\end{equation}
Both summands on the right are positive semidefinite, and the largest eigenvalue of $L_\Delta$ is $n$.
Thus, for every $x\in C_{r-1}(\Delta,\R)$,
\[
\langle \Lup_{r-1}(K)x,x\rangle  \le \langle L_\Delta x,x\rangle  \le n\|x\|^2.
\]
Hence $\lambda_{\max}(\Lup_{r-1}(K))\le n$.

Assume first that $\lambda_{\max}(\Lup_{r-1}(K))=n$.
Choose a unit vector $x$ such that $\Lup_{r-1}(K)x=nx$.
Then
\[
n=\langle \Lup_{r-1}(K)x,x\rangle   \le \langle L_\Delta x,x\rangle  \le n.
\]
Therefore $L_\Delta x=nx$.
By the acyclicity of $\Delta$,
we have $\widetilde H_{r-1}(\Delta,\R)=0$, which implies that
\begin{equation}\label{homo-0}
  x\in \im\p_r^\Delta=\ker\p_{r-1}^\Delta.
\end{equation}
Also, equality in \eqref{total} gives
\[
\langle \Lup_{r-1}(K^c)x,x\rangle=0.
\]
Since $\Lup_{r-1}(K^c)$ is positive semidefinite, this is equivalent to $ (\p_r^{K^c})^*x=0$.
Observe that $\ker (\p_r^{K^c})^* \perp \im \p_r^{K^c}$.
So $x \in (\im \p_r^{K^c})^\perp$.
Because $K^c$ has the complete $(r-1)$-skeleton, we have
 $\ker \p_{r-1}^{K^c}=\ker\p_{r-1}^{\Delta}$, implying $x \in \ker \p_{r-1}^{K^c}$ by \eqref{homo-0}.
Thus,
\[
 0\ne x\in \ker \p_{r-1}^{K^c} \cap (\im \p_r^{K^c})^\perp.
\]
This gives $\widetilde H_{r-1}(K^c,\R)\ne 0$.

Conversely, suppose $\widetilde H_{r-1}(K^c,\R)\ne 0$.
Then there exists a nonzero vector $x\in \ker \p_{r-1}^{K^c} \cap (\im \p_r^{K^c})^\perp$.
As above,
\[  \ker \p_{r-1}^{K^c}=\ker\p_{r-1}^{\Delta}=\im\p_r^{\Delta}.\]
Thus $x\in\im\p_r^{\Delta}$, by Lemma \ref{lem:full-simplex} we have  $L_\Delta x=nx$.
Moreover, since $x\perp\im\p_r^{K^c}$, we have $(\p_r^{K^c})^*x=0$.
Therefore $\Lup_{r-1}(K^c)x=0$.
Using \eqref{total},
\[
\Lup_{r-1}(K)x=(L_\Delta-\Lup_{r-1}(K^c))x=nx.
\]
Thus, $n$ is an eigenvalue of $\Lup_{r-1}(K)$.
Together with the upper bound, this proves the equivalence.
\end{proof}

\begin{corollary}\label{g-eq-n}
Let $G$ be a graph on $n$ vertices.
Then
$$ \lambda_{\max}(L(G)) \le n,$$
with equality if and only if the complement $G^c$ is disconnected.
\end{corollary}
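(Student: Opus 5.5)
This is the case $r=1$ of Theorem \ref{eq-n}, so the plan is to specialize that theorem and then translate its homological condition into graph language. View $G$ as a $1$-dimensional complex $K$ on vertex set $V$ with $|V|=n$; its faces are the empty set, the vertices and the edges. We have $\Lup_0(K)=\p_1\p_1^*=L(G)$, because the $0$-chains are indexed by vertices and $\p_1\p_1^*$ is exactly the usual vertex-edge incidence Laplacian.

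One degenerate case must be set aside first. If $G$ has no edges, then $K$ is $0$-dimensional and Theorem \ref{eq-n} does not literally apply. In that case, however, $L(G)=0<n$ for $n\ge 1$. The complement $G^c=K_n$ is connected when $n\ge 2$, so equality fails on both sides and the statement holds. The case $n=1$ is a trivial convention check ($L=0$, $n=1$, and a single vertex is connected), so equality fails there as well. We may therefore assume $G$ has at least one edge, so that $r=1$.

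Theorem \ref{eq-n} then gives $\lambda_{\max}(L(G))\le n$, with equality if and only if $\tH_0(K^c,\R)\neq 0$. By the definition of the $1$-dimensional complement, $K^c$ consists of all $n$ vertices together with the edges $\binom{V}{2}\setminus E(G)$. This is precisely the complement graph $G^c$, viewed as a complex.

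The remaining step is the standard fact that, in the augmented chain complex, $\dim\tH_0(X,\R)$ equals the number of connected components minus one, for a nonempty graph $X$. I would verify this directly. The map $\ker\p_0$ consists of $0$-chains whose coefficients sum to zero, and $\im\p_1$ is spanned by the differences $v-u$ over edges $uv$. The quotient is the zero-sum vectors modulo component-wise differences, and it has dimension $c-1$. Hence $\tH_0(G^c,\R)\ne0$ exactly when $G^c$ is disconnected. No step is a real obstacle; the only care needed is the edgeless boundary case and matching $K^c$ with $G^c$.
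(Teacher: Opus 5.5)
Your proposal is correct and follows essentially the same route as the paper: specialize Theorem \ref{eq-n} to $r=1$, identify $K^c$ with $G^c$, and use that $\tH_0(G^c,\R)\neq 0$ exactly when $G^c$ is disconnected. Your separate treatment of the edgeless case, where Theorem \ref{eq-n} does not literally apply, is a small point of care that the paper's one-line proof omits.
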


\begin{proof}
Apply Theorem \ref{eq-n} with $r=1$.
The complement $K^c$ is the graph complement $G^c$ together with the full $0$-skeleton, and
$\widetilde H_0(K^c,\R)\ne 0$ exactly when $G^c$ is disconnected.
\end{proof}

\section{A sharper upper bound}\label{section_new}

Let $K$ be an $r$-dimensional complex.
For an $(r-1)$-face $E$ of $K$, let $N_K(E)$ be the set of vertices $u$ outside $E$ such that the union of  $E$ and $\{u\}$ is an $r$-face of $K$, namely
\[
N_K(E)=\{u\in V(K)\setminus E:E\cup\{u\}\in S_r(K)\}.
\]
We call $|N_K(E)|$ the \emph{degree} of $E$ and write $\deg E=|N_K(E)|$.
Let $M_K(F)$ be the set of vertices $u\in V(K)\setminus F$ for which there exists a face $E \in \p F$ such that $E\cup\{u\}$ is an $r$-face of $K$.
Then, for every $F\in S_r(K)$, we have the following formula for the local number of $F$:
\[
 \ell_K(F)=\bigl|\bigcup_{E \in \p F} N_K(E) \bigr| =|F|+|M_K(F)|=  r+1+|M_K(F)|.
\]

Let $K$ be a finite $r$-complex with $r\ge 1$.
For each $(r+2)$-subset $H\subseteq V(K)$, let
$K[H]=\{\sigma\in K:\sigma\subseteq H\}$ be the subcomplex of $K$ induced by $H$.
We shall use
\[
\mathcal{H}_K=\{H\subseteq V(K): |H|=r+2,\ |S_r(K[H])|\ge 2\}.
\]
One may think of $\mathcal{H}_K$ as an $(r+1)$-complex built from $K$ whose $(r+1)$-faces are exactly the elements of $\mathcal{H}_K$.
Define a \emph{local mapping} $\d_K:\mathbb{R}^{B_r(K)}\to\mathbb{R}^{B_{r+1}(\mathcal{H}_K)}$ by
\[
(\d_Kz)_{[H]}=\sum_{F\in S_r(K[H])}([H]:[F])z_{[F]}.
\]
The value of $\d_Kz$ at $[H]$ is determined by the subcomplex $K[H]$.

We give a formula for the quadratic form $z^\top\Ldown_r(K)z$ by means of local number and local mapping.

\begin{lemma}\label{raylei-1}
Let $K$ be a finite $r$-complex with $r\ge 1$.
For every $z=(z_{[F]})\in\R^{B_r(K)}$,
\begin{equation}
  z^\top\Ldown_r(K)z
  = \sum_{F\in S_r(K)} \ell_K(F) z_{[F]}^2
  - \sum_{H\in\calH_K}(\d_Kz)_{[H]}^2.
\end{equation}
\end{lemma}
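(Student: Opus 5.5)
The plan is to expand both sides explicitly in terms of pairs of $r$-faces and compare coefficients of $z_{[F]}^2$ and of $z_{[F]}z_{[F']}$.

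\textbf{Left-hand side.} Write $z^\top\Ldown_r(K)z=\|\p_r z\|^2=\sum_{E\in S_{r-1}(K)}\bigl(\sum_{F\supset E}([F]:[E])z_{[F]}\bigr)^2$. Expanding gives two kinds of terms:
\begin{itemize}
\item diagonal terms $(r+1)z_{[F]}^2$, one for each $E\in\p F$;
\item cross terms $2([F]:[E])([F']:[E])z_{[F]}z_{[F']}$, one for each pair of distinct down neighbors $F,F'$ with $F\cap F'=E$.
\end{itemize}
Two distinct $r$-faces share at most one $(r-1)$-face, so each such pair contributes exactly once.

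\textbf{Right-hand side.} Expand $\sum_{H\in\calH_K}(\d_Kz)_{[H]}^2$. A face $F$ lies in $H$ with $|H|=r+2$ exactly when $H=F\cup\{u\}$ for some $u\notin F$. Such an $H$ lies in $\calH_K$ iff $K[H]$ has another $r$-face $F'\ne F$. Since $F'\subset H$, it has the form $F'=E\cup\{u\}$ for some $E\in\p F$, which means $u\in N_K(E)\setminus F$, i.e.\ $u\in M_K(F)$.
\begin{itemize}
\item \emph{Diagonal coefficient.} The coefficient of $z_{[F]}^2$ in $\sum_H(\d_Kz)_{[H]}^2$ is therefore $|M_K(F)|$. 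Then $\ell_K(F)-|M_K(F)|=r+1$, which matches the LHS diagonal.
\item \emph{Cross terms.} Two distinct faces $F,F'$ lie in a common $H$ iff $|F\cup F'|=r+2$, i.e.\ they are down neighbors, and then $H=F\cup F'$ is unique and automatically lies in $\calH_K$. Its cross term is $2([H]:[F])([H]:[F'])z_{[F]}z_{[F']}$.
\end{itemize}

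\textbf{Sign identity (the main obstacle).} Everything then reduces to
\[
([H]:[F])([H]:[F'])=-([F]:[E])([F']:[E])\quad\text{for } E=F\cap F',\ H=F\cup F'.
\]
This is the usual $\p\p=0$ sign relation: in $\p_r\p_{r+1}[H]$, the coefficient of $[E]$ receives exactly two contributions, via $F$ and via $F'$, and these must cancel. One must check the convention $(\sigma:\tau)$ matches the boundary coefficient. Indeed, moving the omitted vertex $v_j$ to the front costs $(-1)^j$, which is exactly the coefficient of $[\sigma\setminus v_j]$ in $\p[\sigma]$; so $([H]:[F])$ equals the coefficient of $[F]$ in $\p_{r+1}[H]$ (sign $[F]$ relative to the induced order). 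The same holds for $([F]:[E])$.

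I would verify this carefully with orientations handled via the induced orders, perhaps by reducing to $H=\{0,\dots,r+1\}$ with $F,F'$ omitting vertices $i<j$ and computing $(-1)^i(-1)^{j-1}$ versus $(-1)^j(-1)^i$. Once the sign identity holds, the cross terms on the two sides agree with opposite sign, which gives the stated formula.
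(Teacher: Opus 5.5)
Your proposal is correct and follows essentially the same route as the paper, which packages your coefficient comparison as the matrix decomposition $\Ldown_r(K)=(r+1)I+\sum_{H\in\calH_K}R_H^\top(I-\alpha_H\alpha_H^\top)R_H$ and then expands the quadratic form. The ingredients are the same as yours: the sign relation $([F]:[E])([F']:[E])=-([H]:[F])([H]:[F'])$ obtained from $\p_r\p_{r+1}[H]=0$, the uniqueness of $H=F\cup F'$ for down neighbors, and the count $|\{H\in\calH_K:F\subseteq H\}|=|M_K(F)|=\ell_K(F)-(r+1)$.
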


\begin{proof}
By definition,
\begin{equation}\label{mx-down-1}
    (\Ldown_r(K))_{[F],[G]}=
    \begin{cases}
    r+1, & F=G,\\
    ([F]:[F\cap G])([G]:[F\cap G]), & F\ne G\text{ and }F\cap G\in S_{r-1}(K),\\
     0, & \text{otherwise}.
    \end{cases}
\end{equation}

For every $H\in\mathcal{H}_K$, let $A_H$ be the signed adjacency matrix of $K[H]$ defined by
\[A_H = \Ldown_r(K[H]) - (r+1)I.\]
Let $\alpha_H\in\R^{B_r(K[H])}$ be the vector defined by $(\alpha_H)_{[F]}=([H]:[F])$.
Let $F,G \in S_r(K[H])$ and $F \cap G \in S_{r-1}(K[H])$.
Since the coefficient of $[F \cap G]$ in $\p_r\p_{r+1}[H]$ is zero, we have
\[
([H]:[F])([F]:[F \cap G])+([H]:[G])([G]:[F \cap G])=0.
\]
Thus,
\begin{equation}\label{sign-rel}
([F]:[F\cap G])([G]:[F\cap G])=-([H]:[F])([H]:[G]).
\end{equation}
By \eqref{sign-rel}, the matrix $A_H$ has the form
$$
A_H=I-\alpha_H\alpha_H^\top.
$$
Let $R_H:\R^{B_r(K)} \to \R^{B_r(K[H])}$ be the coordinate restriction map, and set $\bar{A}_H=R_H^\top A_HR_H$.
Equivalently, $\bar{A}_H$ is obtained by placing $A_H$ in the rows and columns indexed by $B_r(K[H])$ and setting all other entries equal to zero.
Thus, for every $z\in\mathbb{R}^{B_r(K)}$, one has
$$
z^\top \bar{A}_Hz=(R_Hz)^\top A_H(R_Hz).
$$

We get a decomposition of $\Ldown_r(K)$ into matrices $\bar{A}_H$:
\begin{equation}\label{down-decomp}
  \Ldown_r(K)=(r+1)I+\sum_{H\in\calH_K}\bar{A}_H.
\end{equation}
Indeed, the diagonal entries agree.
If $F\ne G$ and $|F\cap G|<r$, no $(r+2)$-set $H$ contains both $F$ and $G$, so both sides have zero $([F],[G])$-entry.
If $|F\cap G|=r$, then $H_0=F\cup G$ is the unique $(r+2)$-set containing both $F$ and $G$, and the corresponding summand contributes exactly $([F]:[F\cap G])([G]:[F\cap G])$.

Now, by \eqref{down-decomp}, we obtain
\[
\begin{aligned}
        z^\top\Ldown_r(K)z
        &= (r+1)\sum_{F\in S_r(K)}z_{[F]}^2 + \sum_{H\in\calH_K} z^\top \bar{A}_H z \\
        &= (r+1)\sum_{F\in S_r(K)}z_{[F]}^2 + \sum_{H\in\calH_K} (R_Hz)^\top A_H(R_Hz)\\
        & =(r+1)\sum_{F\in S_r(K)}z_{[F]}^2 + \sum_{H\in\calH_K}
        (R_Hz)^\top(I-\alpha_H\alpha_H^\top) (R_Hz)  \\
        &= (r+1)\sum_{F\in S_r(K)}z_{[F]}^2 + \sum_{H\in\calH_K} \sum_{F\in S_r(K[H])}z_{[F]}^2 - \sum_{H\in\calH_K}      \bigl(\sum_{F\in S_r(K[H])} \!\!\! ([H]:[F]) z_{[F]}\bigr)^2.
\end{aligned}
\]

For a fixed $F\in S_r(K)$, every $H\in\calH_K$ with $F\subseteq H$ has the form $H=F\cup\{u\}$ for a unique vertex $u\in V(K)\setminus F$.
Such an $H$ belongs to $\calH_K$ precisely when $u\in M_K(F)$.
Since
\[
 \bigl|\bigcup_{E\in\p F}N_K(E)\bigr|=r+1+|M_K(F)|= r+1 +  |\{H\in\calH_K:F\subseteq H\}|,
\]
the preceding identity becomes
\[
\begin{aligned}
z^\top\Ldown_r(K)z  & =  \sum_{F\in S_r(K)}\bigl|\bigcup_{E\in\p F}N_K(E)\bigr|z_{[F]}^2
        - \sum_{H\in\calH_K}\bigl(\sum_{F\in S_r(K[H])}([H]:[F])z_{[F]}\bigr)^2\\
       & =  \sum_{F\in S_r(K)}\ell_K(F) z_{[F]}^2
        - \sum_{H\in\calH_K} (\d_Kz)_{[H]}^2.
\end{aligned}
\]
\end{proof}

\begin{theorem}\label{sharper-bound}
Let $K$ be a finite $r$-complex with $r\ge 1$.
Then
\[
\lambda_{\max}\bigl(\Lup_{r-1}(K)\bigr)
 \le   \max_{F\in S_r(K)} \big|\bigcup_{E \in \p F} N_K(E) \big|.
\]
\end{theorem}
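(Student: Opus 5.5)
The plan is to pass to the down Laplacian and apply the quadratic form identity of Lemma \ref{raylei-1}. If $S_r(K)=\emptyset$, then $\Lup_{r-1}(K)=0$ and the statement is vacuous, since the maximum over an empty set should be read as $0$ or the case simply excluded. So assume $S_r(K)\neq\emptyset$. By \eqref{eq-ud},
\[
\lambda_{\max}(\Lup_{r-1}(K))=\lambda_{\max}(\Ldown_r(K)).
\]
Because $\Ldown_r(K)$ is real symmetric, the Rayleigh quotient characterization gives
\[
\lambda_{\max}(\Ldown_r(K))=\max_{z\neq 0} \frac{z^\top \Ldown_r(K) z}{z^\top z}.
\]

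Now fix any nonzero $z\in\R^{B_r(K)}$. Lemma \ref{raylei-1} gives
\[
z^\top\Ldown_r(K)z=\sum_{F\in S_r(K)}\ell_K(F)z_{[F]}^2-\sum_{H\in\calH_K}(\d_Kz)_{[H]}^2 .
\]
The second sum is a sum of squares, so it is nonnegative and can be dropped. Bounding each $\ell_K(F)$ by $\ell^*:=\max_{F\in S_r(K)}\ell_K(F)$ then yields
\[
z^\top\Ldown_r(K)z\le \ell^*\,z^\top z .
\]
Taking the maximum over $z$ gives $\lambda_{\max}(\Lup_{r-1}(K))\le \ell^*$, which is exactly the claimed bound, since $\ell_K(F)=\bigl|\bigcup_{E\in\p F}N_K(E)\bigr|$ by definition.

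All the substantive work, namely the local decomposition \eqref{down-decomp} and the sign relation \eqref{sign-rel}, has already been done in Lemma \ref{raylei-1}, so I expect no real obstacle here. The only point to check is that $\ell_K$ in the lemma is the same quantity as in the statement, which holds by the identity $\ell_K(F)=r+1+|M_K(F)|$ established before the lemma. The second inequality $\ell^*\le n$ from \eqref{New-bound} is immediate, because each union is a subset of $V$.

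For later use, I would record what equality forces. If $\lambda_{\max}=\ell^*$ with eigenvector $z$, both dropped terms must vanish: $\d_Kz=0$, and $z$ is supported on the $r$-faces $F$ with $\ell_K(F)=\ell^*$. Conversely, any such nonzero $z$ attains the bound. This is the equality criterion announced in the introduction.
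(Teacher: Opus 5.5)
Your proposal is correct and is essentially the paper's proof: you pass to $\Ldown_r(K)$ via \eqref{eq-ud}, apply Lemma \ref{raylei-1}, discard the nonnegative term $\sum_{H\in\calH_K}(\d_Kz)_{[H]}^2$, and bound each $\ell_K(F)$ by its maximum in the Rayleigh quotient. Your closing remark on the equality case is precisely Theorem \ref{eq-vector}, and the case $S_r(K)=\emptyset$ never arises, since $K$ has dimension $r$.
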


\begin{proof}
By \eqref{eq-ud}, it is enough to bound $\lambda_{\max}(\Ldown_r(K))$.
By Lemma \ref{raylei-1}, for every $z=(z_{[F]})\in\mathbb{R}^{B_r(K)}$,
\[
\begin{aligned}
z^\top\Ldown_r(K)z	& = \sum_{F\in S_r(K)} \ell_K(F) z_{[F]}^2
        - \sum_{H\in\calH_K}(\d_Kz)_{[H]}^2\\
	& \le \sum_{F\in S_r(K)}\ell_K(F) z_{[F]}^2 \\
    &  \le \max_{F\in S_r(K)} \ell_K(F) \sum_{F\in S_r(K)}z_{[F]}^2.
\end{aligned}
\]
Taking the maximum over all nonzero $z\in\mathbb{R}^{B_r(K)}$ gives the result.
\end{proof}

\begin{remark}
When $r=1$, $K$ is a graph $G$.
Theorem \ref{sharper-bound} implies the Das' graph bound \cite{Das2003}:
\[
 \lambda_{\max}(L(G)) \le \max_{\{u,v\} \in E(G)} |N_G(u)\cup N_G(v)|.
\]
\end{remark}

\begin{remark}
The bound in Theorem \ref{sharper-bound} is sharper than the degree-sum bound in  \cite[Theorem 3.5]{FWW2025}:
\[
\lambda_{\max}\bigl(\Lup_{r-1}(K)\bigr)
 \le \max_{F\in S_r(K)}\sum_{E\in\p F}\deg E.
\]
Indeed, for every $F\in S_r(K)$,
\[
 \bigl|\bigcup_{E\in\p F}N_K(E)\bigr|
 \le  \sum_{E\in\p F}|N_K(E)|  =  \sum_{E\in\p F}\deg E.
\]
Thus, the improvement comes from taking into account the overlaps among the
neighborhoods $N_K(E)$.
\end{remark}

Next, we characterize the equality case in the upper bound of Theorem \ref{sharper-bound}.
For an $r$-complex $K$, we denote
$ U_K=\max_{F\in S_r(K)} \ell_K(F)$.

\begin{theorem}\label{eq-vector}
Let $K$ be a finite $r$-complex with $r\ge 1$.
Then equality holds in Theorem \ref{sharper-bound}, that is,
$ \lambda_{\max}\bigl(\Lup_{r-1}(K)\bigr)=U_K$,
if and only if there exists a nonzero vector $z\in\R^{B_r(K)}$ such that $\d_Kz=0$, and $z_{[F]}=0$
for every $F\in S_r(K)$ with $\ell_K(F) <U_K$.
\end{theorem}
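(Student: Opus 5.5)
We will argue through the down Laplacian, using \eqref{eq-ud} to replace $\lambda_{\max}(\Lup_{r-1}(K))$ by $\lambda_{\max}(\Ldown_r(K))$. We then combine the Rayleigh quotient with the exact quadratic form of Lemma \ref{raylei-1}. The key observation is that the chain of inequalities in the proof of Theorem \ref{sharper-bound} becomes a chain of equalities for a unit vector $z$ precisely when both dropped terms vanish. These terms are $\sum_{H\in\calH_K}(\d_Kz)_{[H]}^2$ and $\sum_{F}(U_K-\ell_K(F))z_{[F]}^2$.

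For sufficiency, suppose $z\ne 0$ satisfies $\d_Kz=0$ and $z_{[F]}=0$ whenever $\ell_K(F)<U_K$. By Lemma \ref{raylei-1}, $z^\top\Ldown_r(K)z=\sum_F\ell_K(F)z_{[F]}^2=U_K\|z\|^2$. Hence the Rayleigh quotient of $z$ equals $U_K$, so $\lambda_{\max}(\Ldown_r(K))\ge U_K$. Combined with Theorem \ref{sharper-bound}, this gives equality.

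For necessity, assume $\lambda_{\max}(\Ldown_r(K))=U_K$, and take a unit eigenvector $z$ for this eigenvalue. Lemma \ref{raylei-1} then gives
\[
U_K=\sum_{F\in S_r(K)}\ell_K(F)z_{[F]}^2-\sum_{H\in\calH_K}(\d_Kz)_{[H]}^2 .
\]
Rearranging, we obtain
\[
0=\sum_{F\in S_r(K)}\bigl(\ell_K(F)-U_K\bigr)z_{[F]}^2-\sum_{H\in\calH_K}(\d_Kz)_{[H]}^2 .
\]
Both sums on the right are nonpositive, since $\ell_K(F)\le U_K$ for every $F$. Therefore each vanishes termwise. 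This gives $\d_Kz=0$, and $z_{[F]}=0$ whenever $\ell_K(F)<U_K$, which is the required vector.

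There is no genuine obstacle. The only points needing care are the following. First, $S_r(K)\neq\emptyset$, since $K$ is an $r$-complex, so \eqref{eq-ud} applies and $U_K$ is defined. Second, the vector $z$ lives in $\R^{B_r(K)}$, so we work with $\Ldown_r$ rather than $\Lup_{r-1}$ throughout. Third, the sign in Lemma \ref{raylei-1} must be tracked correctly so that both dropped terms are nonnegative.
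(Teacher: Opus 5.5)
Your proposal is correct and follows essentially the same route as the paper: pass to $\Ldown_r(K)$ via \eqref{eq-ud}, use Lemma \ref{raylei-1} to write $U_K\|z\|^2-z^\top\Ldown_r(K)z$ as $\sum_{F}(U_K-\ell_K(F))z_{[F]}^2+\sum_{H\in\calH_K}(\d_Kz)_{[H]}^2$, a sum of two nonnegative terms, and read off both directions from when these terms vanish. One small wording point: it is the term $-\sum_{H}(\d_Kz)_{[H]}^2$ that is nonpositive, not the sum itself, but your argument uses it correctly.
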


\begin{proof}
By Lemma \ref{raylei-1}, for every $z\in\R^{B_r(K)}$, one has
\begin{equation}\label{eq-ray-1}
 U_K\sum_{F\in S_r(K)}z_{[F]}^2 - z^\top\Ldown_r(K)z
= \sum_{F\in S_r(K)} \bigl(U_K- \ell_K(F) \bigr)z_{[F]}^2 +   \sum_{H\in\calH_K}(\d_Kz)_{[H]}^2.
\end{equation}
Both terms on the right-hand side are nonnegative.
Thus $\lambda_{\max}(\Ldown_r(K))\le U_K$, which is the upper bound proved in Theorem \ref{sharper-bound}.

Suppose first that $\lambda_{\max}(\Lup_{r-1}(K))=U_K$.
By \eqref{eq-ud}, the matrices $\Lup_{r-1}(K)$ and $\Ldown_r(K)$ have the same largest nonzero eigenvalue.
Hence $\lambda_{\max}(\Ldown_r(K))=U_K$.
Choose a nonzero eigenvector $z\in\R^{B_r(K)}$ of $\Ldown_r(K)$ corresponding to the eigenvalue $U_K$.
Then
\[
  z^\top\Ldown_r(K)z=U_K\sum_{F\in S_r(K)}z_{[F]}^2.
\]
Hence, the left-hand side of equation \eqref{eq-ray-1} equals zero.
Since both terms on the right-hand side are nonnegative, both must vanish.
The vanishing of the second term gives $\d_Kz=0$.
The vanishing of the first term gives  $z_{[F]}=0$ if $ \ell_K(F)<U_K$.
Thus $z$ has the required properties.

Conversely, suppose that there exists a nonzero vector $z\in\R^{B_r(K)}$ such that $\d_Kz=0$ and $ z_{[F]}=0$
for every $F\in S_r(K)$ with $ \ell_K(F) <U_K$.
Then both terms on the right-hand side of the displayed identity vanish.
Consequently,
\[
 z^\top\Ldown_r(K)z  =  U_K\sum_{F\in S_r(K)}z_{[F]}^2.
\]
Thus we have  $\lambda_{\max}(\Ldown_r(K))\ge U_K$.
Together with the upper bound in Theorem \ref{sharper-bound}, this yields $\lambda_{\max}(\Ldown_r(K))=U_K$.
Using \eqref{eq-ud} once more, we obtain
$\lambda_{\max}(\Lup_{r-1}(K))=U_K$.
\end{proof}

\begin{corollary}\label{new-n}
Let $K$ be a finite $r$-complex on $n$ vertices with $r\ge 1$.
Then $\lambda_{\max}\bigl(\Lup_{r-1}(K)\bigr)=n$ if and only if $U_K=n$ and there exists a nonzero vector $z\in\R^{B_r(K)}$ such that $\d_Kz=0$ and $z_{[F]}=0$ for every $F\in S_r(K)$ with $\ell_K(F) <n$.
In particular, this condition holds if and only if $\widetilde H_{r-1}(K^c,\R)\ne 0$.
\end{corollary}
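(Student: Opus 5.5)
The plan is to combine Theorem \ref{eq-vector} with the universal bound $U_K\le n$ and Theorem \ref{eq-n}. We first check that $\ell_K(F)\le n$ for every $F\in S_r(K)$. Indeed, $\bigcup_{E\in\p F}N_K(E)\subseteq V$, since each $N_K(E)$ consists of vertices of $K$. Hence $U_K\le n$, and this also gives the second inequality in \eqref{New-bound}.

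For the forward direction, suppose $\lambda_{\max}(\Lup_{r-1}(K))=n$. Theorem \ref{sharper-bound} gives $n\le U_K$, and combined with $U_K\le n$ this forces $U_K=n$. Then $\lambda_{\max}(\Lup_{r-1}(K))=U_K$, so Theorem \ref{eq-vector} produces a nonzero $z$ with $\d_Kz=0$ and $z_{[F]}=0$ whenever $\ell_K(F)<U_K=n$.

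For the converse, assume $U_K=n$ and that such a $z$ exists. Theorem \ref{eq-vector} then gives $\lambda_{\max}(\Lup_{r-1}(K))=U_K=n$. One point needs care here. Theorem \ref{eq-vector} passes between $\Lup_{r-1}(K)$ and $\Ldown_r(K)$ via \eqref{eq-ud}, which requires $S_r(K)\neq\emptyset$. This holds because $K$ is an $r$-complex, and in any case the existence of a nonzero $z\in\R^{B_r(K)}$ already forces it.

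For the final sentence, both the eigenvalue condition and $\widetilde H_{r-1}(K^c,\R)\ne0$ are equivalent to $\lambda_{\max}(\Lup_{r-1}(K))=n$, the latter by Theorem \ref{eq-n}. So the combinatorial condition holds if and only if $\widetilde H_{r-1}(K^c,\R)\ne 0$, by transitivity.

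There is no substantive obstacle, since everything reduces to results already established. The only step that needs attention is recording explicitly that $U_K\le n$, because that is what turns the hypothesis $\lambda_{\max}=n$ into $U_K=n$, so that Theorem \ref{eq-vector} applies with $U_K$ replaced by $n$.
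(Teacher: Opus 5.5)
Your proposal is correct and follows essentially the same route as the paper: you use the chain $\lambda_{\max}(\Lup_{r-1}(K))\le U_K\le n$ to reduce the condition $\lambda_{\max}=n$ to $U_K=n$ together with equality in Theorem \ref{sharper-bound}, and then apply Theorem \ref{eq-vector} and Theorem \ref{eq-n}. Your explicit justification of $U_K\le n$ (the union of the $N_K(E)$ lies in $V$) and your remark about $S_r(K)\neq\emptyset$ are harmless additions to the paper's argument.
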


\begin{proof}
By Theorem \ref{sharper-bound}, one has
\[
 \lambda_{\max}\bigl(\Lup_{r-1}(K)\bigr)\le U_K\le n.
\]
Hence $\lambda_{\max}\bigl(\Lup_{r-1}(K)\bigr)=n$ holds if and only if $U_K=n$ and equality holds in Theorem \ref{sharper-bound}.
The stated condition involving $z$ follows immediately from Theorem \ref{eq-vector}.
The final statement follows from Theorem \ref{eq-n}.
\end{proof}

\section{A family attaining the sharper bound}\label{section_fam}
The equality condition in Theorem \ref{eq-vector} gives an effective way to construct complexes that attain the upper bound in Theorem \ref{sharper-bound}.
We now describe such a family.
It is a high-dimensional analog of the extremal graph families for Das' bound \cite[Theorem 2.2]{YLT2005}.

Let
\[
V=V_0\sqcup V_1\sqcup\cdots\sqcup V_r
\]
be a partition into nonempty parts.
A pure $r$-complex $C$ on $V$ is called an \emph{$(r+1)$-partite} with respect to this partition if every $r$-face of $C$ contains exactly one vertex from each part.
For $0\le i\le r$ and for an $r$-subset $S\subseteq V\setminus V_i$ containing exactly one vertex from each part $V_j$ with $j\ne i$, write the degree of $S$ in $C$:
\[
d_i(S)=|\{v\in V_i:S\cup\{v\}\in S_r(C)\}|.
\]
We say that $C$ is \emph{semiregular} if, for each $0\le i\le r$, there exists a positive integer $d_i$ such that
$d_i(S)=d_i$ for every such set $S$.

For an $(r+1)$-partite semiregular $r$-complex $C$ on $V$, let $C^+$ be a pure $r$-complex on $V$ with $S_r(C)\subseteq S_r(C^+)$.
The faces in $S_r(C^+)\setminus S_r(C)$ will be called \emph{added faces}.
We say that $C^+$ is obtained from $C$ by \emph{admissible additions} if the following conditions hold.
First, no added face contains exactly one vertex from each part.
Second, let $F=\{x_0,\ldots,x_r\}\in S_r(C)$ with $x_i\in V_i$ for all $i$.
If an added face has the form
\[
P=(F\setminus\{x_j\})\cup\{u\}
\]
for some $u\in V_i$ and some $i\ne j$, then the subset $(F\setminus\{x_i\})\cup\{u\}$ belongs to $S_r(C)$.

We denote by $\mathcal{F}_r^+(C)$ the family of all pure $r$-complexes $C^+$ obtained from $C$ by admissible additions and satisfying
\[
\ell_{C^+}(P)=\bigl|\bigcup_{E\in\partial P}N_{C^+}(E)\bigr|\le d_0+\cdots+d_r
\]
for every added face $P\in S_r(C^+)\setminus S_r(C)$.
Finally, let $\mathcal{F}_r^+$ be the family of all pure $r$-complexes obtained in this way from some $(r+1)$-partite semiregular $r$-complex.

\begin{remark}
When $r=1$, a bipartite semiregular $1$-complex $C$ is just a bipartite semiregular graph with bipartition $V=V_0\sqcup V_1$.
For such a fixed $C$, the family $\mathcal{F}_1^+(C)$ consists exactly of the graphs obtained from $C$ by adding edges inside $V_0$ or inside $V_1$ between vertices with the same neighborhood in the other part.
Indeed, the admissibility condition says precisely that an added edge inside one part can only join two vertices with the same neighborhood in the other part.

Moreover, the extra condition on the local number in the definition of $\mathcal{F}_1^+(C)$ is automatic.
For example, if $\{u,v\}\subseteq V_0$ is an added edge, then $N_C(u)=N_C(v)\subseteq V_1$ and $|N_C(u)|=d_1$.
All vertices of $V_0$ with this same neighborhood are adjacent in $C$ to each vertex of $N_C(u)$, so there are at most $d_0$ such vertices.
Hence $\left|N_{C^+}(u)\cup N_{C^+}(v)\right|\le d_0+d_1$.
The case of an added edge inside $V_1$ is the same.
Thus, after taking all semiregular bipartite graphs $C$, the family $\mathcal{F}_1^+$ recovers the graph family associated with equality in Das' bound, which is exactly the statement of \cite[ Theorem 2.2]{YLT2005}.
\end{remark}

\begin{theorem}\label{family}
Let $C$ be a $(r+1)$-partite semiregular $r$-complex with respect to the partition $V=V_0\sqcup\cdots\sqcup V_r$.
Let $d_0,\ldots,d_r$ be the corresponding degrees.
If $C^+\in\calF_r^+(C)$, then
	\[
	\lambda_{\max}\bigl(\Lup_{r-1}(C^+)\bigr)
	=\max_{F\in S_r(C^+)}\bigl|\bigcup_{E\in\p F}N_{C^+}(E)\bigr|
	=d_0+\cdots+d_r.
	\]
In particular, every complex $K \in\calF_r^+$ attains the upper bound in Theorem \ref{sharper-bound}.
\end{theorem}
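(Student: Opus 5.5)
The plan is to apply the equality criterion of Theorem \ref{eq-vector} to $K=C^+$, after first computing $U_{C^+}$ exactly. Write $D=d_0+\cdots+d_r$. There are two steps: show $U_{C^+}=D$, with the maximum attained at every face of $C$; then construct a nonzero $z$ supported on $S_r(C)$ with $\d_{C^+}z=0$.

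\emph{Step 1: local numbers.} Let $F=\{x_0,\ldots,x_r\}\in S_r(C)$ with $x_i\in V_i$, and set $E_i=F\setminus\{x_i\}$. I claim that
\[
N_{C^+}(E_i)\cap V_i=N_C(E_i),
\]
which has $d_i$ elements. Indeed, $E_i\cup\{u\}$ with $u\in V_i$ is $(r+1)$-partite, so it cannot be an added face.

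If instead $u\in V_j$ with $j\ne i$ and $E_i\cup\{u\}\in S_r(C^+)$, then this face is added. Admissibility then gives $(F\setminus\{x_j\})\cup\{u\}\in S_r(C)$, that is, $u\in N_C(E_j)$. Hence
\[
\bigcup_i N_{C^+}(E_i)=\bigsqcup_i N_C(E_i),
\]
where the union is disjoint because the pieces lie in different parts. Therefore $\ell_{C^+}(F)=D$. For added faces, the defining condition of $\calF_r^+(C)$ gives $\ell_{C^+}(P)\le D$. Since $S_r(C)\neq\emptyset$ (all $d_i$ are positive), we get $U_{C^+}=D$.

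\emph{Step 2: the vector.} For each $F\in S_r(C)$, orient $[F]=[x_0,\ldots,x_r]$ in part order. Define $z_{[F]}=1$ for these faces and $z=0$ on added faces; then $z$ vanishes off faces of maximal local number.

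Take $H\in\calH_{C^+}$. If $H$ contains no face of $C$, then $(\d z)_{[H]}=0$ trivially. Otherwise $H=F\cup\{u\}$ with $F\in S_r(C)$ and $u\in V_j$. Since $H\in\calH_{C^+}$, there is a second $r$-face $P=H\setminus\{x_k\}$ in $C^+$, with $x_k\ne u$.
\begin{itemize}
\item If $k=j$, then $P$ is partite, so $P\in S_r(C)$.
\item If $k\ne j$, then $P$ is added, and admissibility forces $H\setminus\{x_j\}\in S_r(C)$.
\end{itemize}
Moreover, $H$ has exactly two vertices $x_j,u$ in $V_j$ and one vertex in every other part, so the $C$-faces inside $H$ are exactly $H\setminus\{x_j\}$ and $H\setminus\{u\}$. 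By the case analysis, both of them are faces of $C$.

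\emph{Sign computation.} Order $H$ by parts, with $x_j$ immediately before $u$. Deleting either of these adjacent vertices, in positions $p$ and $p+1$, leaves a set whose induced order is its part order, which is the chosen orientation. So the two coefficients $([H]:[F])$ equal $(-1)^p$ and $(-1)^{p+1}$, up to a common global sign convention, and they cancel. Thus $(\d_{C^+}z)_{[H]}=0$ for every $H$.

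\emph{Conclusion.} Theorem \ref{eq-vector} now gives $\lambda_{\max}(\Lup_{r-1}(C^+))=U_{C^+}=D$. The statement about $\calF_r^+$ follows immediately.

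I expect the main obstacle to be the second step. One part is ruling out an $H$ that contains exactly one $C$-face, which would give $(\d z)_{[H]}=\pm1$; this is exactly where admissibility is needed. The other part is keeping the orientation conventions in $([H]:[F])$ consistent enough that the adjacent-position cancellation is valid.
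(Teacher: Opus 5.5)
Your proposal is correct and follows the paper's proof essentially step for step. It uses the same exchange-face argument to get $\ell_{C^+}(F)=d_0+\cdots+d_r$ on every face of $C$, the same indicator vector on $S_r(C)$ with part-order orientations, and the same admissibility argument showing each $H\in\calH_{C^+}$ contains either zero or both of its two partite $r$-subsets as faces of $C$. The only cosmetic differences are these: you check the sign cancellation directly by adjacent positions rather than through the relation coming from $\p_r\p_{r+1}[H]=0$, and you make explicit that $S_r(C)\neq\emptyset$ (so $z\neq 0$), which the paper leaves implicit.
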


\begin{proof}
Let $F=\{x_0, \ldots, x_r\}$ be an $r$-face of $C$, where $x_i\in V_i$.
Fix $0\le i\le r$ and let $u\in V_i$.
We claim that $u\in\bigcup_{E\in\p F}N_{C^+}(E)$ if and only if
\begin{equation}\label{claim-1}
	(F\setminus\{x_i\})\cup\{u\}\in S_r(C).
\end{equation}
Indeed, if $(F\setminus\{x_i\})\cup\{u\}\in S_r(C)$, then $u\in N_{C^+}(F\setminus\{x_i\})$.
Conversely, suppose that $u\in\bigcup_{E\in\p F}N_{C^+}(E)$.
Then there exists $j$ such that $(F\setminus\{x_j\})\cup\{u\}\in S_r(C^+)$.
If $j=i$, then $(F\setminus\{x_i\})\cup\{u\}$ contains exactly one vertex from each part.
Since the added faces in $S_r(C^+)\setminus S_r(C)$ do not contain exactly one vertex from each part, this face must belong to $S_r(C)$.
If $j\ne i$, then $(F\setminus\{x_j\})\cup\{u\}$ is an added face.
By the admissible addition condition, the exchange face $(F\setminus\{x_i\})\cup\{u\}$ belongs to $S_r(C)$.

By semiregularity, for each fixed $i$, the number of vertices $u\in V_i$ such that $(F\setminus\{x_i\})\cup\{u\}\in S_r(C)$ is $d_i$.
Summing over all parts and using the claim \eqref{claim-1}, we get
\[
\ell_{C^+}(F)=\bigl|\bigcup_{E\in\p F}N_{C^+}(E)\bigr|
	= \sum_{i=0}^r \bigl|\bigcup_{E\in\p F}N_{C^+}(E) \cap V_i\bigr|
	=d_0+\cdots+d_r
\]
for every $F\in S_r(C)$.
On the other hand, by the definition of $\calF_r^+(C)$, every added face $P\in S_r(C^+)\setminus S_r(C)$ satisfies
\[	
\ell_{C^+}(P)\le d_0+\cdots+d_r.	
\]
Therefore	
\[	
U_{C^+}	=\max_{Q\in S_r(C^+)}\ell_{C^+}(Q)=	d_0+\cdots+d_r.
\]
	
By Theorem \ref{eq-vector}, it remains to construct a nonzero vector
	$z\in\R^{B_r(C^+)}$ such that $\d_{C^+}z=0$ and $z_{[Q]}=0$ for every $Q\in S_r(C^+)$ with $\ell_{C^+}(Q)<U_{C^+}$.
Orient every $r$-face of $C$ by the part order: if $F=\{x_0,\ldots,x_r\}$ with $x_i\in V_i$, then $F$ is oriented as $[x_0,\ldots,x_r]$.
Choose arbitrary orientations for the added faces.
Define $z\in\R^{B_r(C^+)}$ by
	\[ 	z_{[Q]}=
	\begin{cases}
		1,& Q\in S_r(C),\\
		0,& Q\in S_r(C^+)\setminus S_r(C).
	\end{cases}
	\]
Then $z\ne 0$.
Since $\ell_{C^+}(F)=U_{C^+}$ for every $F\in S_r(C)$, the vector $z$ satisfies the required support condition.
	
It remains to prove that $\d_{C^+}z=0$.
Fix $H\in\calH_{C^+}$.
We claim that
\begin{equation}\label{claim-2}
	(\d_{C^+}z)_{[H]} = \sum_{Q\in  S_r(C^+[H])}([H]:[Q])z_{[Q]} =0.
\end{equation}
Since $z_{[Q]}=0$ for every added face, only the faces in $S_r(C^+[H]) \cap S_r(C)$ contribute.
If $S_r(C^+[H])\cap S_r(C)=\emptyset$, then $(\d_{C^+}z)_{[H]}=0$.

Assume that $S_r(C^+[H]) \cap S_r(C)\ne\emptyset$.
Since every face of $C$ contains exactly one vertex from each part and $|H|=r+2$, the set $H$ contains two vertices in exactly one part, say
the vertices $a$ and $b$ in the part $V_i$.
Write
\[
H=\{x_0,\ldots,x_{i-1},a,b,x_{i+1},\ldots,x_r\},
\]
where $x_j\in V_j$ for every $j\ne i$.
The only $r$-subsets of $H$ which contain exactly one vertex from each part are $F_a=H\setminus\{b\}$ and $F_b=H\setminus\{a\}$.
Hence
\[
S_r(C^+[H]) \cap S_r(C)\subseteq\{F_a,F_b\}.
\]
	
We claim that $S_r(C^+[H]) \cap S_r(C)$ cannot contain exactly one of $F_a$ and $F_b$.
Suppose, for instance, that $F_a\in S_r(C)$ and $F_b\notin S_r(C)$.
Since $H\in\calH_{C^+}$, there exists another face $Q\in S_r(C^+[H])$ with $Q\ne F_a$.
The only possible faces of $C$ contained in $H$ are $F_a$ and $F_b$, so $Q$ must be an added face.
Since $Q$ and $F_a$ are distinct $(r+1)$-subsets contained in  the $(r+2)$-set $H$, we have $|Q\cap F_a|=r$.
By the admissible addition conditions, the face obtained from $F_a$ by replacing $a$ with $b$ belongs to $S_r(C)$.
This face is precisely $F_b$, a contradiction.
The same argument excludes the case $F_b\in S_r(C)$ and $F_a\notin S_r(C)$.
Therefore $S_r(C^+[H]) \cap S_r(C)$ is either empty or equal to $\{F_a,F_b\}$.

It remains to check the second case.
Let $F_{a,b}=H\setminus\{a,b\}$.
With the part-order orientations on $F_a$ and $F_b$, the vertices $a$ and $b$ occupy the same position.
Hence the two signs $([F_a]:[F_{a,b}])$ and $([F_b]:[F_{a,b}])$ are equal.
As in \eqref{sign-rel}, the boundary maps give
\[
	([H]:[F_a])([F_a]:[F_{a,b}])+([H]:[F_b])([F_b]:[F_{a,b}])=0.
\]
Thus, if $S_r(C^+[H])  \cap S_r(C)=\{F_a,F_b\}$, then
\[
	(\d_{C^+}z)_{[H]} = ([H]:[F_a])z_{[F_a]}+([H]:[F_b])z_{[F_b]} = ([H]:[F_a])+([H]:[F_b]) = 0.
\]
Hence $(\d_{C^+}z)_{[H]}=0$ for every $H\in\calH_{C^+}$.
Thus $\d_{C^+}z=0$.
	
By Theorem \ref{eq-vector}, equality holds in Theorem \ref{sharper-bound} for $C^+$.
Since $U_{C^+}=d_0+\cdots+d_r$, we obtain
\[
	\lambda_{\max}\bigl(\Lup_{r-1}(C^+)\bigr)
	=U_{C^+}=d_0+\cdots+d_r.
\]
\end{proof}

\begin{remark}
Let $r=2$, and let $V=V_0\sqcup V_1\sqcup V_2$ be a partition into three nonempty parts.
Let $C$ be the complete tripartite $2$-complex whose $2$-faces are precisely the triples containing one vertex from each part.
Then $C$ is partite semiregular with $d_0=|V_0|, d_1=|V_1|, d_2=|V_2|$.
Choose two distinct vertices $a,a'\in V_0$, a vertex $b\in V_1$, and a vertex $c\in V_2$.
Let $C_1^+$ and $C_2^+$ be defined by
\[
	S_2(C_1^+)=S_2(C)\cup\{\{a,a',b\}\}
	\text{ and }
	S_2(C_2^+)=S_2(C)\cup\{\{a,a',b\},\{a,a',c\}\}.
\]
The added faces do not contain exactly one vertex from each part.
Thus, they are genuinely higher-dimensional additions and have no analog in the graph case, where added edges are necessarily contained in a single part.
Since $C$ is complete tripartite, the admissibility condition is satisfied for both $C_1^+$ and $C_2^+$.
Moreover, for every $i=1,2$ and for every added face $P$ in either complex, one has
\[
\bigl|\bigcup_{E\in\partial P}N_{C_i^+}(E)\bigr| \le |V|=d_0+d_1+d_2.
\]
Hence $C_1^+,C_2^+\in\mathcal{F}_2^+(C)$.
By Theorem \ref{family}, both complexes attain the same  bound:
\[
\lambda_{\max}\bigl(\Lup_1(C_1^+)\bigr)=\lambda_{\max}\bigl(\Lup_1(C_2^+)\bigr)=d_0+d_1+d_2.
\]

This example shows that, in dimension two, one may add genuinely non-partite faces without increasing the largest up-Laplacian eigenvalue beyond the universal value already attained by the complete tripartite core.
\end{remark}

\begin{remark}
The converse of Theorem \ref{family} is not true in higher dimensions, even under the  path-connectedness condition on top-dimensional faces.
Let $r=2$ and let $K$ be the pure $2$-complex on the vertex set $V=\{1,2,3,4,5\}$ with
\[
S_2(K)=\{123,234,345,451,512\},
\]
where a face $\{a,b,c\}$ is abbreviated to $abc$.
One can see that $K$ is a triangulation of the M\"obius strip, and it is $2$-down path connected.
Indeed, the five $2$-faces form a tight cycle under adjacency by common edges:
\[
123\sim 234\sim 345\sim 451\sim 512\sim 123.
\]

For every $F\in S_2(K)$, one checks that
\[
\ell_K(F)=\bigl|\bigcup_{E\in\p F}N_K(E)\bigr|=5.
\]
Hence $U_K=5$.
We orient every $2$-face increasingly, and for each $H\in\mathcal{H}_K$, we also order its vertices increasingly.
Let $z\in\R^{B_2(K)}$ be defined by
\[
z_{[123]}=z_{[234]}=z_{[345]}=z_{[145]}=z_{[125]}=1.
\]
The only $4$-subsets $H$ with $|S_2(K[H])|\geq 2$ are
\[
1234,\quad 1235,\quad 1245,\quad 1345,\quad 2345.
\]
For these sets, the corresponding sums are as follows:
\[
\begin{array}{c|c|c}
H & S_2(K[H]) & \sum_{F\in S_2(K[H])}([H]:[F])z_{[F]} \\ \hline
1234 & \{123,234\}     & -z_{[123]}+z_{[234]}=0 \\[2pt]
1235 & \{123,125\}     & -z_{[123]}+z_{[125]}=0 \\[2pt]
1245 & \{125,145\}     & z_{[125]}-z_{[145]}=0 \\[2pt]
1345 & \{145,345\}     & -z_{[145]}+z_{[345]}=0 \\[2pt]
2345 & \{234,345\}     & -z_{[234]}+z_{[345]}=0.
\end{array}
\]
Thus $(\d_Kz)_{[H]}=0$ for every $H\in\calH_K$, and hence $\d_Kz=0$.
By Theorem \ref{eq-vector}, equality holds in Theorem \ref{sharper-bound}.
Therefore
\[
\lambda_{\max}\bigl(\Lup_1(K)\bigr)=U_K=5.
\]

We now show that $K\notin\calF_2^+$.
Suppose, to the contrary, that $K\in\calF_2^+(C)$ for some $3$-partite semiregular $2$-complex $C$.
Let the partition be
\[
V=V_0\sqcup V_1\sqcup V_2,
\]
and write $n_i=|V_i|$.
Since $|V|=5$, up to a permutation of the parts, their sizes are either
$3,1,1$ or $2,2,1$.

If the sizes are $3,1,1$, then semiregularity forces the three $2$-faces of $C$
to contain the same edge determined by the two singleton parts.
Since $S_2(C)\subseteq S_2(K)$, this would imply that some edge of $K$ is
contained in three $2$-faces.
However, every edge of $K$ is contained in at most two $2$-faces.
This is impossible.

If the sizes are $2,2,1$, then semiregularity forces $C$ to have all four
$2$-faces containing one vertex from each part.
In particular, all four faces contain the unique vertex in the singleton part.
Since $S_2(C)\subseteq S_2(K)$, this vertex would be contained in at least four
$2$-faces of $K$.
However, every vertex of $K$ is contained in exactly three $2$-faces.
This is also impossible.
Therefore $K\notin\calF_2^+$, although $K$ is $2$-down path connected and
attains the upper bound.
\end{remark}

\end{document}